\documentclass[12pt,oneside]{amsart}
\usepackage{amsmath,amssymb,latexsym,soul,cite,mathrsfs}
\usepackage{enumitem}
\usepackage{tikz}
\usepackage{lmodern}
\usepackage{ragged2e} 
\usepackage{color,enumitem,graphicx}
\usepackage[colorlinks=true,urlcolor=blue,
citecolor=red,linkcolor=blue,linktocpage,pdfpagelabels,
bookmarksnumbered,bookmarksopen]{hyperref}
\usepackage[english]{babel}
\usepackage{lineno}
\usepackage[left=2.4cm,right=2.4cm,top=2.5cm,bottom=2.4cm]{geometry}

\usepackage[hyperpageref]{backref}
\numberwithin{equation}{section}
\newtheorem{theorem}{Theorem}
\newtheorem{lemma}[theorem]{Lemma}

\newtheorem{example}{Example}

\title[On a gradient term for second-order PDEs]{On a gradient term for a class of second-order PDEs and applications to the infinity Laplace equation}

\author[J.F.\ de Oliveira]{Jos\'{e} Francisco de Oliveira}
\address[J.F.\ de Oliveira]{
\newline\indent Department of Mathematics
	\newline\indent 
	Federal University of Piau\'{i}
	\newline\indent
	64049-550 Teresina, PI, Brazil}
	\email{\href{mailto:jfoliveira@ufpi.edu.br}{jfoliveira@ufpi.edu.br}}
	
\subjclass[2010]{Primary 35J60, Secondary 35J25, 35J65, 35J66}
\keywords{$m$-Laplacian, $k$-Hessian, infinity-Laplacian, fully nonlinear elliptic equations}

\begin{document}
\maketitle
\begin{abstract}
We propose a natural gradient term for a class of second-order partial differential equations of the form
\begin{equation}\nonumber
  M(x,Du,D^2u)+g(u)N(x,Du, D^2u)+f(x,u)=0  \;\;\mbox{in}\;\; \Omega,
\end{equation}
where  $\Omega\subset\mathbb{R}^n$ is an open set, $f\in C(\Omega\times \mathbb{R}, \mathbb{R})$,   $M$ defines  the partial differential operator, $N$ is a quadratic term driven by the gradient $Du$ and $M$ itself, and $g\in C(\mathbb{R},\mathbb{R})$. We establish conditions on the class of operators $M$ for the existence of a change of variables $v = \Phi(u)$ that transforms the previous equation into another one of the form 
\begin{equation}\nonumber  
   M(x,Dv, D^2v) + h(x,v)=0 \quad \text{in} \;\; \Omega
\end{equation}  
which does not depend on the quadratic term $N$. The results presented here unify previous findings for the Laplacian, $m$-Laplacian, and 
$k$-Hessian operators, which were derived separately by different authors and are restricted to $C^2$ solutions with fixed sign. Our work provides a more general framework, extending these findings to a broader class of nonlinear partial differential equations, including the infinity-Laplacian o\-pe\-ra\-tor. In addition, we also include both $C^2$ and viscosity solutions that may change sign. As an application, we also obtain an Aronsson-type result and investigate viscosity solutions for the Dirichlet problem associated with the infinity Laplace equation with its natural gradient term.
\end{abstract}
\section{Introduction and mains results} 
\noindent Let $\Omega$ be an open subset of $\mathbb{R}^n$, and denote by $S_n(\mathbb{R})$ the set of real symmetric $n\times n$ matrices. We investigate a natural gradient term for the class of second-order nonlinear PDEs
\begin{equation}\label{Geq1}
M(x,Du,D^2u) + g(u) N(x,Du,D^2u) + f(x,u) = 0 \;\text{in}\;\Omega,
\end{equation}
where $f:\Omega\times\mathbb{R}\to\mathbb{R}$ and $g:\mathbb{R}\to\mathbb{R}$ are continuous, 
$M,N:\Omega\times\mathbb{R}^n\times S_n(\mathbb{R})\to\mathbb{R}$, with $M$ continuous and admitting the partial derivatives $\partial^{ij}_X M(x,p,X)$ for $X=[X_{ij}]\in S_n(\mathbb{R})$, 
and $N$ strongly depending on the gradient $Du$. In fact,  $N$ is the quadratic form generated the ``gradient'' of $M$ in the last $n^2$ variables at $(x,p,X)$, i.e.   
\begin{equation}\label{NaturalG}
     N(x,p, 
X)=\langle \partial_{X}M(x,p,X)p,p\rangle,\;\;\mbox{for all}\;\; (x,p,X)\in \Omega\times\mathbb{R}^n\times S_n(\mathbb{R}),
\end{equation}
 where   $\partial_{X} M(x,p,X)\in S_n(\mathbb{R})$ is the $n\times n$ matrix whose $ij$ entry is 
\begin{equation}\label{Qmatrix}
\partial ^{ij}_{X}M(x,p,X)=\frac{\partial}{\partial X_{ij}}M(x, p, X)\;\;\mbox{if} \;\; p\in\mathbb{R}^n\setminus\{0\}\;\;\mbox{and}\;\;\partial ^{ij}_{X}M(x,0,X)=0.
\end{equation}
In view of the invariance criterion proved below, we say that $N$ is the natural gradient term for the equation \eqref{Geq1}.
For our purposes, we will assume the following hypotheses on  $M$:
\begin{enumerate}
  \item [\hypertarget{h1}{$(h_1)\;$}] $M:\Omega\times\mathbb{R}^n\times S_n(\mathbb{R})\to \mathbb{R}$ is $\alpha$-homogeneous in $p$, i.e., there exists $\alpha\in\mathbb{R}$ such that  
  $$M(x,\lambda p, X)=|\lambda|^{\alpha} M(x,p, X), \;\;\mbox{for all} \;\;\lambda\in\mathbb{R}\setminus\{0\};$$
  \item [\hypertarget{h2}{$(h_2)\;$}] For $N$ as in \eqref{NaturalG}, there exists $\beta\in\mathbb{N}\cup\{0\}$ such that
\[
M(x, p, \gamma X + \sigma (p\otimes p)) = \gamma^{\beta+1} M(x, p, X) + \sigma\gamma^{\beta}N(x, p, X),
\]
for any scalars $\gamma \in \mathbb{R}\setminus\{0\}$ and $\sigma \in \mathbb{R}$,   where $p\otimes q$ is the tensor product $q^{T}p$, or  $[q_ip_j]$ in the matrix form, of the vectors $p,q\in\mathbb{R}^n$.
\end{enumerate}
We note that  \hyperlink{h1}{$(h_1)$} and \hyperlink{h2}{$(h_2)$}  are satisfied for a wide class of operators, such as the Laplacian, the $m$-Laplacian, the $k$-Hessian, the $1$-Laplacian and the infinity-Laplacian. So, the general class $M$ includes both divergent and non-divergent form operators (cf. Section~\ref{sec-ex} for details).

On the function $g\in C(\mathbb{R},\mathbb{R})$ we shall assume the following assumption.
\begin{enumerate}
  \item [\hypertarget{g0}{$(g_0)\;$}] there exists $s_0\ge0$ such that $g\ge 0$ in $[s_0,\infty)$ and $g\le 0$ on $(-\infty, -s_0]$.
\end{enumerate}
Taking into account hypothesis \hyperlink{g0}{$(g_0)$}, we  define the change of variable $\Phi_{g}:\mathbb{R}\to\mathbb{R}$  given by
 \begin{equation}\label{KKC}
     \Phi_{g}(t)=\int_{0}^{t}\operatorname{e}^{G(s)}ds, \quad \mbox{where}\quad G(s)=\int_{0}^{s}g(\tau)d\tau.
 \end{equation}
Note that,  for $t\le 0$, we have $ \Phi_{g}(t)=-\int_{t}^{0}\operatorname{e}^{G(s)}ds$ and $G(s)=-\int_{s}^{0}g(\tau)d\tau$. In addition, condition \hyperlink{g0}{$(g_0)$} ensures that $\Phi_g$ has a characteristic profile. Specifically, $\Phi_g$ is a strictly increasing $C^2$-diffeomorphism on $\mathbb{R}$ that is concave on $(-\infty, s_0]$, convex on $[s_0, \infty)$, and satisfies $\Phi_{g}(-\infty)=-\infty$, $\Phi_{g}(0)=0$,  and $\Phi_{g}(+\infty)=+\infty$, see Lemma~\ref{C2-change} below.

\subsection{$C^2$-solutions case}
\label{sec2}
We established the invariance of the natural gradient term $N$ for  partial differential equations of the form $M+gN+f=0$ that works for $C^2$-solutions.
\begin{theorem}\label{theorem1} Let $f\in C(\Omega\times \mathbb{R}, \mathbb{R})$ and let  $\Phi_{g}$ be given by \eqref{KKC} with $g\in C(\mathbb{R},\mathbb{R})$ such that \hyperlink{g0}{$(g_0)$} holds.  Suppose that  $M$ satisfies \hyperlink{h1}{$(h_1)$} and \hyperlink{h2}{$(h_2)$}. If $u\in C^{2}(\Omega)$ is  solution  for the equation
\begin{equation}\label{ECG}
    M(x,Du,D^2u)+g(u)N(x,Du, D^2u)+f(x,u)=0
\end{equation} then $v=\Phi_g(u)$ is also a solution to
\begin{equation}\label{EWG}
    M(x,Dv, D^2v)+h(x,v)=0,\;\;\mbox{with} \;\; h(x,s)=\mathrm{e}^{(\alpha+\beta+1)G(\Phi^{-1}_{g}(s))}f(x, \Phi^{-1}_{g}(s)).
\end{equation}
Reciprocally, if $v\in C^{2}(\Omega)$ solves the equation \eqref{EWG}, then $u=\Phi^{-1}_g(v)$ solves \eqref{ECG}.
\end{theorem}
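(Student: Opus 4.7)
The plan is to perform a direct change of variables: substitute $v = \Phi_{g}(u)$ into $M(x, Dv, D^2v)$, apply the chain rule, and show that hypotheses \hyperlink{h1}{$(h_1)$} and \hyperlink{h2}{$(h_2)$} produce exactly the algebraic identity that absorbs the natural gradient term $g(u)N$. Since Lemma~\ref{C2-change} guarantees that $\Phi_g \in C^2(\mathbb{R})$ with $\Phi'_g = e^{G}$ and $\Phi''_g = g\,e^{G}$, the chain rule gives
\begin{equation*}
    Dv = e^{G(u)}\, Du, \qquad D^2 v = e^{G(u)} D^2 u + e^{G(u)} g(u)\, Du \otimes Du.
\end{equation*}

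Setting $p = Du$, $X = D^2 u$, and $q := Dv = e^{G(u)} p$, I would first rewrite $D^2 v = e^{G(u)} X + e^{-G(u)} g(u)\, q \otimes q$, so that the tensor product in the Hessian of $v$ is expressed in terms of $q$ rather than $p$. Applying \hyperlink{h2}{$(h_2)$} at the base point $(x, q, X)$ with $\gamma = e^{G(u)}$ and $\sigma = e^{-G(u)} g(u)$ then yields
\begin{equation*}
    M(x, Dv, D^2 v) = e^{(\beta+1)G(u)}\, M(x, q, X) + e^{(\beta-1)G(u)} g(u)\, N(x, q, X).
\end{equation*}
Next I would re-express the right-hand side in terms of $p$ using \hyperlink{h1}{$(h_1)$}: immediately $M(x, q, X) = e^{\alpha G(u)} M(x, p, X)$, while differentiating the identity in \hyperlink{h1}{$(h_1)$} with respect to each $X_{ij}$ shows that $\partial^{ij}_X M$ is itself $\alpha$-homogeneous in $p$, which combined with the quadratic dependence of $\langle Aq,q\rangle$ on $q$ yields $N(x, q, X) = e^{(\alpha+2)G(u)} N(x, p, X)$. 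Collecting exponents, both terms acquire the same factor $e^{(\alpha + \beta + 1)G(u)}$, giving
\begin{equation*}
    M(x, Dv, D^2 v) = e^{(\alpha+\beta+1)G(u)}\bigl[M(x, Du, D^2 u) + g(u)\, N(x, Du, D^2 u)\bigr].
\end{equation*}

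Inserting \eqref{ECG} to replace the bracket by $-f(x, u)$ and writing $u = \Phi^{-1}_g(v)$ then produces \eqref{EWG} with the stated $h$. The converse is essentially free, because every step above is an algebraic identity between $C^2$ functions and the exponential factor is non-vanishing, so the same chain of equalities, read in reverse starting from a $C^2$ solution $v$ of \eqref{EWG} and setting $u = \Phi^{-1}_g(v)$, recovers \eqref{ECG}; this is licit since $\Phi_g$ is a $C^2$-diffeomorphism of $\mathbb{R}$. The main difficulty I anticipate is the bookkeeping: one must extract the two distinct exponents $\beta+1$ and $\beta-1$ from \hyperlink{h2}{$(h_2)$} at the base point $(x, q, X)$ and check that the homogeneity of $N$ in $p$ inherited from \hyperlink{h1}{$(h_1)$} contributes exactly $\alpha + 2$, so that both terms collapse into the single common factor $\alpha + \beta + 1$. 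At points where $Du(x) = 0$ the convention $\partial^{ij}_X M(x, 0, X) = 0$ from \eqref{Qmatrix} forces $N(x, Du, D^2u)$ and the $q \otimes q$ correction to vanish simultaneously, so the identity holds trivially there.
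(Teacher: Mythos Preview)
Your proof is correct and follows essentially the same change-of-variables strategy as the paper, combining the chain rule for $D\Phi_g(u)$ and $D^2\Phi_g(u)$ with hypotheses \hyperlink{h1}{$(h_1)$} and \hyperlink{h2}{$(h_2)$} to arrive at the identity $M(x,Dv,D^2v)=e^{(\alpha+\beta+1)G(u)}\bigl[M(x,Du,D^2u)+g(u)N(x,Du,D^2u)\bigr]$. The only difference is the order of operations: the paper first applies \hyperlink{h1}{$(h_1)$} to strip the factor $e^{\alpha G(u)}$ from the gradient slot and then invokes \hyperlink{h2}{$(h_2)$} directly at the base point $p=Du$ (with $\gamma=\Phi_g'(u)$, $\sigma=\Phi_g''(u)$), which bypasses your detour through $q=Dv$ and the auxiliary verification that $N$ is $(\alpha+2)$-homogeneous in $p$.
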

Theorem~\ref{theorem1} is mainly motivated by the works by \cite{MET2,Arcoya} for the Laplace equation, the recent papers \cite{ubilla} for the  $m$-Laplace equation, \cite{Monge} for the Monge-Ampère equation, and more generally \cite{Nosso} for the $k$-Hessian equation. All of them have treated the corresponding equation in the presence of a natural gradient-type term and have explored a Kazdan-Kramer \cite{kazdan} type change of variables to transform the original equation into another without the gradient term. Theorem~\ref{theorem1} unifies the approach for these operators and extends it to a general operator defined by a function $M$. It is worth noting that our choice for the gradient term  $N$ is intrinsically related to the operator through hypothesis \hyperlink{h2}{$(h_2)$}.

We observe that the properties $\Phi_{g}(-\infty)=-\infty$, $\Phi_{g}(0)=0$,  and $\Phi_{g}(+\infty)=+\infty$ ensure that the either homogeneous $u_{|_{\partial\Omega}}=0$ or blow-up $u_{|_{\partial\Omega}}=\pm\infty$ boundary conditions for \eqref{ECG}-\eqref{EWG} are preserved by the Theorem~\ref{theorem1}. In addition, if we assume that the solution $u \in C^{2}(\Omega)$ is positive (or negative),  the result still holds under the weaker condition $g\in C([0,\infty),\mathbb{R}_{+})$ (or $g\in C((-\infty,0]),\mathbb{R}_{-}))$ instead of \hyperlink{g0}{$(g_0)$}. We recall that the $k$-admissible solutions the $k$-Hessian boundary value problem are often negative, see \cite{Tso,Nosso} for details, while for the Laplace and $m$-Laplace, $2\le m<\infty$, the $C^2$ solutions are often positive, see \cite{MET2, Arcoya, ubilla}.

\subsection{Viscosity solutions case}
The hypotheses \hyperlink{h1}{$(h_1)$} and \hyperlink{h2}{$(h_2)$} reach a broad class of second-order nonlinear equations, including  degenerate elliptic operators such as the infinity Laplacian. In this scenario,  we do not expect the existence of $C^2$-solutions, as observed by several authors \cite{Yu,Savin,Evans,EvansSavin, EvansSmart} since the classic paper by Aronsson \cite{Aronsson}. To address this challenge, we present below an extension of Theorem~\ref{theorem1} that is suitable for the viscosity solutions theory. Firstly, we will say a few words about the notion of viscosity solutions, for a deeper discussion we recommend \cite{Cabre,Ishii, GuideV}.

Let  $\mathcal{F}: \Omega \times\mathbb{R} \times \mathbb{R}^n \times S_{n}(\mathbb{R}) \to \mathbb{R}$ be a general second-order operator. We say that continuous function $u:\Omega\to\mathbb{R}$ is a viscosity subsolution of the equation
\begin{equation}\label{V-S}
    \mathcal{F}(x, w, Dw, D^2w)= 0 \quad \text{in } \Omega,
\end{equation}
if whenever $\varphi \in C^2(\Omega)$ is such that $ u - \varphi$ has a local maximum at some point $x_0 \in \Omega$, then there holds
\[
    \mathcal{F}(x_0, u(x_0), D\varphi(x_0), D^2 \varphi(x_0)) \le 0.
\]
Similarly, a continuous function \( u: \Omega \to \mathbb{R} \) is called a viscosity supersolution of equation \eqref{V-S}, if $\varphi \in C^2(\Omega)$ is such that $u-\varphi$ has a local minimum at some point $x_0 \in \Omega $, then there holds
$$
     \mathcal{F}(x_0, u(x_0), D\varphi(x_0), D^2 \varphi(x_0) \ge 0.
$$
We say that $ u$ is a viscosity solution of the equation \eqref{V-S} when $ u$ is both a subsolution and a supersolution.
\begin{theorem}\label{thm-VS}  Suppose that $f$, $g$, $h$,  $\Phi_g$,  and $M$ are as in Theorem~\ref{theorem1}. If $v:\Omega\to\mathbb{R}$ is a viscosity solution of the equation
\begin{equation}\label{EWGV}
    M(x,Dw,D^2w)+h(x,w)=0 \;\;\mbox{in}\;\;\Omega
\end{equation}
then $u=\Phi^{-1}_g(v)$ is a viscosity solution of
\begin{equation}\label{ECGV}
  M(x,Dw,D^2w)+g(w)N(x,Dw, D^2w)+f(x,w)=0 \;\;\mbox{in}\;\;\Omega.
\end{equation} 
Conversely, if $u\in C(\Omega)$  is a viscosity solution of the equation \eqref{ECGV}, then $v=\Phi_g(u)$ is a viscosity solution \eqref{EWGV}.
\end{theorem}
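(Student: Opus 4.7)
The plan is to transport the pointwise algebraic identity behind Theorem~\ref{theorem1} from classical to viscosity test functions, exploiting that $\Phi_g$ and its inverse are strictly increasing $C^2$-diffeomorphisms (Lemma~\ref{C2-change}), with $\Phi_g'(t)=e^{G(t)}$ and $\Phi_g''(t)=g(t)e^{G(t)}$. Because such a map preserves local extrema, and the two directions of the statement are symmetric under the interchange $\Phi_g\leftrightarrow\Phi_g^{-1}$, I detail only the forward direction for subsolutions; the supersolution case is obtained by reversing all inequalities and the converse implication by repeating the argument with $\Phi_g^{-1}$ in place of $\Phi_g$.

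Let $v$ be a viscosity subsolution of \eqref{EWGV} and let $\varphi\in C^2(\Omega)$ touch $u=\Phi_g^{-1}(v)$ from above at $x_0\in\Omega$; after adding a constant, I may assume $\varphi(x_0)=u(x_0)$. Monotonicity of $\Phi_g$ gives $v-\Phi_g(\varphi)=\Phi_g(u)-\Phi_g(\varphi)\le 0$ in a neighborhood of $x_0$, with equality at $x_0$, so $\psi:=\Phi_g\circ\varphi\in C^2(\Omega)$ is an admissible test function for $v$ at $x_0$. The chain rule at $x_0$ yields
\[
D\psi(x_0)=e^{G(u(x_0))}D\varphi(x_0),\qquad D^2\psi(x_0)=e^{G(u(x_0))}\bigl(D^2\varphi(x_0)+g(u(x_0))\,D\varphi(x_0)\otimes D\varphi(x_0)\bigr).
\]
Writing $p=D\varphi(x_0)$, $X=D^2\varphi(x_0)$ and $\lambda=e^{G(u(x_0))}>0$, applying hypothesis \hyperlink{h1}{$(h_1)$} in the gradient slot and then \hyperlink{h2}{$(h_2)$} with $\gamma=\lambda$, $\sigma=\lambda g(u(x_0))$ in the Hessian slot reproduces the identity that drove the $C^2$ proof,
\[
M(x_0,D\psi(x_0),D^2\psi(x_0))=\lambda^{\alpha+\beta+1}\bigl(M(x_0,p,X)+g(u(x_0))\,N(x_0,p,X)\bigr).
\]
Inserting this into the viscosity inequality $M(x_0,D\psi(x_0),D^2\psi(x_0))+h(x_0,v(x_0))\le 0$, dividing by $\lambda^{\alpha+\beta+1}>0$, and using the definition of $h$ in \eqref{EWG}, I recover precisely the viscosity subsolution inequality for \eqref{ECGV} at $x_0$.

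The main obstacle is making the key identity hold uniformly, especially at the degenerate locus $p=D\varphi(x_0)=0$. There the prescription \eqref{Qmatrix} forces $N(x_0,0,X)=0$, while \hyperlink{h2}{$(h_2)$} with $\sigma=0$ reduces $M(x_0,0,\cdot)$ to a $(\beta+1)$-homogeneous expression in the Hessian; combined with \hyperlink{h1}{$(h_1)$} under $\alpha\ge 0$ (which covers all operators of interest in the paper, including the infinity-Laplacian with $\alpha=2$, $\beta=0$), continuity forces $M(x_0,0,X)=0$, so both sides of the identity vanish consistently. Away from this point the identity is a direct chain-rule computation, already performed in Theorem~\ref{theorem1}. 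Once the identity is established, the passage from $C^2$ to viscosity solutions is automatic, since the monotone $C^2$ nature of $\Phi_g$ supplies exactly the test-function correspondence that the viscosity framework demands.
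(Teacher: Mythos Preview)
Your proof follows essentially the same route as the paper's: normalize the test function so that $\varphi(x_0)=u(x_0)$, push it through the increasing $C^2$-diffeomorphism $\Phi_g$, invoke the viscosity inequality for $v$, and unwind via the chain-rule identity of Lemma~\ref{A-change} before dividing by the positive factor $[\Phi_g'(u(x_0))]^{\alpha+\beta+1}$. One small slip in your degenerate-locus discussion: the claim that $M(x_0,0,X)=0$ whenever $\alpha\ge 0$ fails for $\alpha=0$ (e.g.\ the Laplacian and the $k$-Hessian, where $M(x,0,X)=\mathrm{tr}(X)$ and $\mathrm{tr}_k(X)$ respectively), but in that case $\lambda^{\alpha}=1$ and both sides of your displayed identity reduce to $\lambda^{\beta+1}M(x_0,0,X)$, so the conclusion is unaffected---indeed the paper simply invokes Lemma~\ref{A-change} without singling out $p=0$.
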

As far as we know, Theorem~\ref{thm-VS} represents the first advance toward determining invariant gradient term that work for viscosity solutions. This substantially broadens the approach to these types of problems, allowing for the investigation of operators for which the existence of $C^2$-solutions is not expected.

This paper is organized as follows. In Section~\ref{sec-ex}, we present some examples of operators covered by our results. In Section~\ref{sec3}, we present applications of Theorem~\ref{theorem1} and Theorem~\ref{thm-VS} to the Dirichlet problem for the infinity Laplacian operator. Finally, in Section~\ref{sec-proof}, we provide the proofs of Theorem~\ref{theorem1} and Theorem~\ref{thm-VS}.
\section{Examples of operators covered by our results}\label{sec-ex}
We present below some PDEs which can be handled by Theorem~\ref{theorem1} and Theorem~\ref{thm-VS}.
\subsection{Laplace operator}\label{subsec-Laplace} For the Laplace operator $\Delta u=\mathrm{tr}(D^2u)$, where $\mathrm{tr}$ denotes the trace function on $S_n(\mathbb{R})$,  we have $M(x,p, X)=\mathrm{tr}(X)$
for $(x,p, X)\in \Omega\times\mathbb{R}^n\times S_n(\mathbb{R})$. Note that $N(x,p,X)=|p|^2$ and thus  $M$ satisfies  \hyperlink{h1}{$(h_1)$} and \hyperlink{h2}{$(h_2)$} with  $\alpha=0$ and $\beta=0$. The corresponding equation $M+gN+f=0$ in \eqref{Geq1} becomes
\begin{equation}\label{LaplaceDelta}
   \Delta u+ g(u)|Du|^{2}+f(x,u)=0. 
\end{equation}
For an in-depth discussion on the equation \eqref{LaplaceDelta}  under different conditions on $g$ and $f$, we recommend \cite{Arcoya,MET2,Garcia,JEANJEAN1,JEANJEAN2} and the references therein.
\subsection{$m$-Laplace operator, $m\in [1,\infty)$}\label{subsec-mLaplace} For the $m$-Laplace operator $\Delta_mu=\mathrm{div}(|Du|^{m-2}Du)$ we have  $M(x,0,X)=0$ and, if $(x,p,X)\in \Omega\times(\mathbb{R}^n\setminus\{0\})\times S_n(\mathbb{R})$ 
$$M(x,p, X)=|p|^{m-4}\Big(|p|^{2}\operatorname{tr}{X}+(m-2)\langle Xp, p\rangle\Big).$$
Hence
$$
\partial_{X}M(x,p,X) = |p|^{m-4}\Big(|p|^{2}I+(m-2)(p \otimes p)\Big)\;\; \mbox{and}\;\; N(x,p, X)= (m-1)|p|^{m}.
$$
Thus,  $M$ satisfies  \hyperlink{h1}{$(h_1)$} and \hyperlink{h2}{$(h_2)$} with  $\alpha=m-2$ and $\beta=0$. Here, the 
equation \eqref{Geq1} becomes   
\begin{equation}\label{E-Plaplace}
  \Delta_m u+ g(u)(m-1)|Du|^{m}+f(x,u)=0.   
\end{equation}
Note that, according with our approach,  the $1$-Laplace operator does not admit invariant non-trivial gradient term. The $m$-Laplace equation with gradient term \eqref{E-Plaplace} was recently investigated in \cite{ubilla,Adimurthi1} and \cite{DeCoster}.

\subsection{$k$-Hessian operators, $1\le k\le n$)}\label{subsec-kHessian} For the $k$-Hessian operator we have  $M(x,p, X)=\mathrm{tr}_{k}(X)$ is the $k$-trace of $X$, that is, the sum of all $k\times k$ principal minors of the matrix $X$. From the properties of the $k$-trace in \cite{IvoI, IvoII}, for any $X\in S_n(\mathbb{R})$ and $p\in\mathbb{R}^n$ we have
\begin{equation}\label{k-tracep}
 \mathrm{tr}_k(X\pm p\otimes p)=\mathrm{tr}_k(X)\pm \langle S^{*}_k(X)p,p \rangle.    
\end{equation}
where $S^{*}_k(X)=[S^{ij}_k(X)]$  with $S^{ij}_k(X)=\frac{\partial}{\partial X_{ij}}\mathrm{tr}_k(X)$. Noticing that $\mathrm{tr}_k(\gamma X)=\gamma^k\mathrm{tr}_k(X)$ the chain rule yields $S^{*}_k(\gamma X)=\gamma^{k-1}S^{*}_k(X)$, for $\gamma\in\mathbb{R}\setminus\{0\}$.  So, we obtain
\begin{equation*}
   M(x,p,\gamma X +\sigma(p\otimes p))=\gamma^{k} M(x,p,X)+\gamma^{k-1}\sigma\langle \partial_{X}N(x,p,X)p,p \rangle.
\end{equation*}
It follows that $M$ satisfies  \hyperlink{h1}{$(h_1)$} and \hyperlink{h2}{$(h_2)$} with  $\alpha=0$ and $\beta=k-1$. The $k$-Hessian equation with gradient-type term was recently studied in \cite{Monge} for $k=n$ and \cite{Nosso} for general $1\le k\le n$.\\

\subsection{Infinity-Laplacian operator}\label{subsec-infinityLaplace} For the infinity-Laplacian  operator $\Delta_{\infty}u=\langle D^2u Du, Du\rangle$,  we have $M(x, p, X) = \langle Xp, p\rangle
$, for all $(x,p, X)\in \Omega\times\mathbb{R}^n\times S_n(\mathbb{R})$. Then,  
\begin{equation}\label{infityGL}
    \partial_{X}M(x,p,X)=p\otimes p\;\;\mbox{and}\;\; N(x,p,X)=|p|^4.
\end{equation}
The assumptions \hyperlink{h1}{$(h_1)$} and \hyperlink{h2}{$(h_2)$} are satisfied with $\alpha=2$ and $\beta=0$. In this case,  Theorem~\ref{theorem1} can be used to investigate the equation
\begin{equation}\label{InfinityL-turbinado}
\Delta_{\infty}u+g(u)|Du|^4+f(x,u)=0
\end{equation}
through the study of the simpler equation
\begin{equation}\label{InfinityL-puro}
  \Delta_{\infty}v+h(x,v)=0.  
\end{equation}
The infinity-Laplacian operator serves as a canonical singular operator with diverse applications. Its theory is derived from absolutely minimizing Lipschitz extensions and $L^{\infty}$ variational problems, connecting it to optimal transport \cite{Aronsson2} -- a framework that provides the mathematical basis for applications in digital image processing \cite{Caselles}. Moreover, its characterization as the continuous limit of certain stochastic tug-of-war games establishes a connection to the fields of game theory and control theory \cite{Peres,Figa}.
  
The equation \eqref{InfinityL-puro} has been investigated by several authors in different contexts, see \cite{Aronsson,GuideV,Yu,Bhatta,Bhatta2,Damiao,GLu} and the references therein. However, \eqref{InfinityL-turbinado} remains unaddressed in the literature. In this work, we provide existence, uniqueness, and non-existence results  for the  infinity Laplacian equation \eqref{InfinityL-turbinado} in the presence of a gradient term (cf. Section~\ref{sec3}).
\subsection{Game-theoretic operator}\label{subsec-game} For the game theoretic or inhomogeneous normalized infinity-Laplacian $\Delta^{\mathcal{N}}_{\infty}u=\frac{1}{|Du|^2}\Delta_{\infty}u$, we can take $M(x, p, X) = \frac{1}{|p|^2}\langle Xp, p\rangle
$, for all $(x,p, X)\in \Omega\times(\mathbb{R}^n\setminus\{0\})\times S_n(\mathbb{R})$ and $M(x,0,X)=0$. So,    
$\partial_{X}M(x,p,X)=\frac{1}{|p|^2}(p\otimes p)\;\;\mbox{and}\;\; N(x,p,X)=|p|^2$ and $M$ satisfies \hyperlink{h1}{$(h_1)$} and \hyperlink{h2}{$(h_2)$} with $\alpha=0$ and $\beta=0$. Thus, Theorem~\ref{theorem1} allows us to treat the  equation
\begin{equation}\label{game-op}
\Delta^{\mathcal{N}}_{\infty}u+g(u)|Du|^2+f(x,u)=0.
\end{equation}
For a comprehensive treatment of the game-theoretic operator, we refer the reader to \cite{Peres,Figa} and the references therein.
\section{Applications to infinity Laplace equation}\label{sec3}
In this section, we will apply Theorem~\ref{theorem1} and Theorem~\ref{thm-VS} to investigate new classes of equations containing a gradient term for the infinity-Laplacian operator.  In what follows, we assume that $g\in C(\mathbb{R}, \mathbb{R})$ such that \hyperlink{g0}{$(g_0)$} holds.

Firstly, we apply Theorem~\ref{theorem1} to obtain an extension of Aronsson result, see \cite{Aronsson,Yu}.
\begin{theorem}\label{app-Aron}
  Let $\Omega$ be  a connected open subset of $\mathbb{R}^n$.  Suppose that $u\in C^{2}(\Omega)$ solves
$$
\Delta_{\infty}u+g(u)|Du|^4=0.
$$  
Then, either $Du\not=0$ in $\Omega$ or $u$ reduces to a constant.  
\end{theorem}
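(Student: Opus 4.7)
The plan is to use Theorem~\ref{theorem1} to reduce the problem to the classical Aronsson theorem for the ordinary infinity harmonic equation. As computed in Subsection~\ref{subsec-infinityLaplace}, the infinity-Laplacian is covered by the theorem with $M(x,p,X)=\langle Xp,p\rangle$, $N(x,p,X)=|p|^4$, $\alpha=2$ and $\beta=0$. Since here $f\equiv 0$, the associated function $h$ in \eqref{EWG} also vanishes identically, so the transformed equation is simply $\Delta_{\infty}v=0$.

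First, I would verify that $v:=\Phi_g(u)$ is indeed $C^2(\Omega)$: since $g\in C(\mathbb{R},\mathbb{R})$, the antiderivative $G$ is $C^1$, hence $\Phi_g'(t)=\mathrm{e}^{G(t)}$ is $C^1$ and $\Phi_g$ is $C^2$. Composed with $u\in C^2(\Omega)$, this yields $v\in C^2(\Omega)$. Applying the forward direction of Theorem~\ref{theorem1} with the data above, the function $v$ is a classical solution of
\[
\Delta_{\infty}v = 0 \quad \text{in } \Omega.
\]

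Next, I would invoke the classical Aronsson theorem (see \cite{Aronsson,Yu}) for $C^2$ infinity harmonic functions on a connected open set: either $Dv\neq 0$ everywhere in $\Omega$, or $v$ is constant. To transfer this dichotomy back to $u$, I would use the chain rule together with the fact that $\Phi_g$ is a strictly increasing $C^2$-diffeomorphism: computing $Dv=\Phi_g'(u)Du=\mathrm{e}^{G(u)}Du$ and noting that $\mathrm{e}^{G(u)}>0$ pointwise, we obtain $Dv(x)=0$ if and only if $Du(x)=0$. Consequently, the alternative ``$Dv\neq 0$ in $\Omega$'' becomes ``$Du\neq 0$ in $\Omega$,'' while ``$v$ constant'' becomes ``$u$ constant'' because $\Phi_g$ is injective.

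I do not expect a genuine obstacle here: the argument is essentially a bookkeeping application of Theorem~\ref{theorem1}, and the only nontrivial ingredient is Aronsson's classical rigidity statement, which is invoked as a black box. The mild point to be careful about is ensuring the regularity $v\in C^2$ so that Aronsson's $C^2$ version applies directly, which is automatic from the smoothness of $\Phi_g$ described above.
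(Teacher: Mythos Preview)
Your proposal is correct and follows essentially the same route as the paper's proof: apply Theorem~\ref{theorem1} (with the infinity-Laplacian data $\alpha=2$, $\beta=0$, $f\equiv0$, hence $h\equiv0$) to obtain a $C^2$ solution $v=\Phi_g(u)$ of $\Delta_\infty v=0$, invoke Aronsson's theorem, and transfer the dichotomy back via $Dv=\mathrm{e}^{G(u)}Du$ with $\mathrm{e}^{G(u)}>0$. Your added remarks on why $v\in C^2$ and why the injectivity of $\Phi_g$ gives ``$v$ constant $\Rightarrow$ $u$ constant'' are fine and make the argument slightly more explicit than the paper's version.
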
 
\begin{proof}
Let $u\in C^2(\Omega)$ be a solution of $\Delta_{\infty}u+g(u)|Du|^4=0$. Then, taking into account \eqref{infityGL} in the Section~\ref{subsec-infinityLaplace},  Theorem~\ref{theorem1} tell us that $v=\Phi_g(u)$ is a $C^{2}(\Omega)$ solution of the $\Delta_{\infty}v=0$. By the Aronsson theorem \cite{Aronsson,Yu}, we obtain that either $Dv\not=0$ in $\Omega$ or $v$ reduces to a constant. Since $Dv=D(\Phi_{g}(u))=\operatorname{e}^{G(\Phi_{g}(u))}Du$, we also have that either $Du\not=0$ in $\Omega$ or $u$ reduces to a constant.
\end{proof}
Next, we explore the discussion in \cite{Bhatta,Bhatta2} to provide some conditions for the existence, non-existence and uniqueness of viscosity solutions for the inhomogeneous Dirichlet problem for infinity Laplacian equation with gradient term
\begin{equation}\label{Dirichle P}
    \left\{
    \begin{aligned}
     \Delta_{\infty}u+g(u)|Du|^4 +f(x,u)&=0\;&\mbox{in}&\;\; \Omega\\
    u&=b\;&\mbox{on}&\;\; \partial\Omega
    \end{aligned}
    \right.
\end{equation}
where $\Omega\subset\mathbb{R}^n$, $n\ge 2$ is a bounded domain,  $b\in C(\partial\Omega)$ and $f\in C(\Omega\times\mathbb{R}, \mathbb{R})$. In addition, we will consider the following conditions on the pair $f$ and $g$.
\begin{enumerate}
  \item [\hypertarget{f0}{$(f_0)$}] for every compact interval $I\subset\mathbb{R}$
  $$\sup_{\Omega\times I}|f(x,t)|<\infty.$$
  \item [\hypertarget{fg}{$(fg)$}] for some $\nu,\xi\in [-\infty,0]$ we have
  \[
  \limsup_{t \to \infty} \frac{\sup_{\Omega} \operatorname{e}^{3G(t)}f(x,t)}{\Big(\int_{0}^{t}\operatorname{e}^{G(s)}ds\Big)^3} = \nu \quad\mbox{and}\quad \limsup_{t \to -\infty} \frac{\inf_{\Omega} \operatorname{e}^{3G(t)}f(x,t)}{\Big(\int_{0}^{t}\operatorname{e}^{G(s)}ds\Big)^3} = \xi
  \]
  where $G(s)=\int_{0}^{s}g(\tau)d\tau$.
\end{enumerate}
\begin{theorem}\label{app-exist} Suppose \hyperlink{f0}{$(f_0)$} and \hyperlink{fg}{$(fg)$} hold.  Then  the Dirichlet problem \eqref{Dirichle P} admits a viscosity solution $u\in C(\overline{\Omega})$.  
\end{theorem}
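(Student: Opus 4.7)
The plan is to use the invariance result, Theorem~\ref{thm-VS}, to reduce \eqref{Dirichle P} to an existence question for the gradient-free inhomogeneous infinity Laplace equation, which has been treated in \cite{Bhatta,Bhatta2}. For the infinity Laplacian we have $\alpha=2$ and $\beta=0$ (see Section~\ref{subsec-infinityLaplace}), so the transformed nonlinearity from Theorem~\ref{theorem1} takes the form
\[
h(x,s)=\mathrm{e}^{3G(\Phi_{g}^{-1}(s))}f\bigl(x,\Phi_{g}^{-1}(s)\bigr).
\]
Since $\Phi_{g}\colon\mathbb{R}\to\mathbb{R}$ is a $C^{2}$-diffeomorphism with $\Phi_{g}(0)=0$ and $\Phi_{g}(\pm\infty)=\pm\infty$, the transformed boundary datum $\Phi_{g}\circ b$ lies in $C(\partial\Omega)$. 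If one can produce $v\in C(\overline{\Omega})$ that is a viscosity solution of
\[
\Delta_{\infty}v+h(x,v)=0\ \text{in}\ \Omega,\qquad v=\Phi_{g}\circ b\ \text{on}\ \partial\Omega,
\]
then $u:=\Phi_{g}^{-1}\circ v$ is continuous on $\overline{\Omega}$, solves the interior equation by the first assertion of Theorem~\ref{thm-VS}, and satisfies $u=b$ on $\partial\Omega$ thanks to $\Phi_{g}^{-1}\circ\Phi_{g}=\mathrm{id}$.

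Next, I would translate $(f_{0})$ and $(fg)$ into the hypotheses on $h$ required by the existence theory in \cite{Bhatta,Bhatta2}. From continuity of $\Phi_{g}^{-1}$ and of $G$, for any compact $J\subset\mathbb{R}$ the set $\Phi_{g}^{-1}(J)$ is compact, so $(f_{0})$ yields that $h$ is bounded on $\Omega\times J$. For the growth at infinity, substituting $t=\Phi_{g}^{-1}(s)$, so that $s=\int_{0}^{t}\mathrm{e}^{G(\tau)}\,d\tau$ and $s\to\pm\infty$ if and only if $t\to\pm\infty$, condition $(fg)$ becomes exactly
\[
\limsup_{s\to+\infty}\frac{\sup_{\Omega}h(x,s)}{s^{3}}=\nu\le 0,\qquad \limsup_{s\to-\infty}\frac{\inf_{\Omega}h(x,s)}{s^{3}}=\xi\le 0.
\]
The exponent $3$ in the definition of $h$ and the cubic denominator in $(fg)$ are calibrated precisely to mirror the degree-three homogeneity of $\Delta_{\infty}$ in the solution, so this reformulation is an exact identity rather than a one-sided estimate.

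With these conditions in hand, I would invoke the Perron-type existence theorem for $\Delta_{\infty}v+h(x,v)=0$ from \cite{Bhatta,Bhatta2}: the sign constraints $\nu,\xi\le 0$ provide suitable global super- and subsolutions (for instance sufficiently large constants, together with linear or cone-type barriers), and combined with boundary barriers for the continuous datum $\Phi_{g}\circ b$ on the bounded domain $\Omega$ the Perron construction yields a viscosity solution $v\in C(\overline{\Omega})$. Setting $u=\Phi_{g}^{-1}\circ v$ then produces the desired solution of \eqref{Dirichle P}. The step I expect to be the main obstacle is precisely this matching with \cite{Bhatta,Bhatta2}: verifying that the hypotheses on $h$ derived above fit the exact assumptions of the referenced existence theorem, and, in particular, checking that continuous barriers are available at $\partial\Omega$ so that $v$ attains its boundary values in the classical sense, which is what yields $u\in C(\overline{\Omega})$ after inverting $\Phi_{g}$.
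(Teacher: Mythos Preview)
Your proposal is correct and follows essentially the same route as the paper: define $h(x,s)=\mathrm{e}^{3G(\Phi_g^{-1}(s))}f(x,\Phi_g^{-1}(s))$, translate $(f_0)$ and $(fg)$ via $t=\Phi_g^{-1}(s)$ into boundedness and cubic growth conditions on $h$, cite the existence result of \cite{Bhatta} for the transformed problem with boundary data $\Phi_g\circ b$, and pull back with Theorem~\ref{thm-VS}. The only cosmetic difference is that the paper sets $h_0=-h$ and rewrites the equation as $\Delta_\infty v=h_0(x,v)$ so that your limsup conditions become the liminf conditions $\liminf_{s\to\infty}\inf_\Omega h_0/s^3=-\nu\ge 0$ and $\liminf_{s\to-\infty}\sup_\Omega h_0/s^3=-\xi\ge 0$, matching verbatim the hypotheses of \cite[Theorem~5.5]{Bhatta}; this resolves the ``matching'' step you flagged as the main obstacle.
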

\begin{proof}
    Let us define $h(x,s)=\mathrm{e}^{3G(\Phi^{-1}_{g}(s))}f(x, \Phi^{-1}_{g}(s))$, where $\Phi^{-1}_{g}$ is the inverse of the change $\Phi_{g}$ in \eqref{KKC}. By continuity of $\Phi^{-1}_{g}$  and from \hyperlink{f0}{$(f_0)$} we have that the function $h_0=-h$ satisfies
    \begin{equation}\label{h0-bound}
        \sup_{\Omega\times I}|h_0(x,s)|=\sup_{\Omega\times I}\mathrm{e}^{3G(\Phi^{-1}_{g}(s))}|f(x, \Phi^{-1}_{g}(s))|<\infty,
    \end{equation}
    for any compact interval $I\subset\mathbb{R}$. In addition, by setting the change $t=\Phi^{-1}_{g}(s)$ and using \hyperlink{fg}{$(fg)$} we have that
    \begin{align*}
        \limsup_{s \to \infty} \frac{\sup_{\Omega} h(x,s)}{s^3}&= \limsup_{s \to \infty} \frac{\sup_{\Omega} \mathrm{e}^{3G(\Phi^{-1}_{g}(s))}f(x, \Phi^{-1}_{g}(s))}{s^3}\\
        &=\limsup_{t \to \infty} \frac{\sup_{\Omega} \mathrm{e}^{3G(t)}f(x, t)}{\Big(\int_{0}^{t}\operatorname{e}^{G(s)}ds\Big)^3}=\nu.
    \end{align*}
    Analogously,
     \begin{align*}
        \limsup_{s \to -\infty} \frac{\inf_{\Omega} h(x,s)}{s^3}=\xi.
    \end{align*}
    It follows that 
    \begin{align}\label{h01}
        \liminf_{s \to \infty} \frac{\inf_{\Omega} h_0(x,s)}{s^3}&= \liminf_{s \to \infty} \frac{\inf_{\Omega} -h(x,s)}{s^3}=- \limsup_{s \to \infty} \frac{\sup_{\Omega}h(x,s)}{s^3}=-\nu
    \end{align}
    and
     \begin{align}\label{h02}
        \liminf_{s \to -\infty} \frac{\sup_{\Omega} h_0(x,s)}{s^3}&= \liminf_{s \to -\infty} \frac{\sup_{\Omega} -h(x,s)}{s^3}=- \limsup_{s \to -\infty} \frac{\inf_{\Omega}h(x,s)}{s^3}=-\xi.
    \end{align}
    Hence, by combining \eqref{h0-bound}, \eqref{h01} and \eqref{h02}, we get that \cite[Theorem~5.5]{Bhatta} ensures the existence of a viscosity solution $v\in C(\overline{\Omega})$ for the Dirichlet problem
    \begin{equation}\label{DP-simpes}
    \left\{
    \begin{aligned}
     \Delta_{\infty}v&=h_0(x,v)\;&\mbox{in}&\;\; \Omega\\
    v&=\tilde{b}\;&\mbox{on}&\;\; \partial\Omega
    \end{aligned}
    \right.
\end{equation}
where $\tilde{b}\in C(\partial\Omega)$ is given by $\tilde{b}=\Phi_g(b)$. Finally, from Theorem~\ref{thm-VS} and Section~\ref{subsec-infinityLaplace}, we conclude that $u=\Phi^{-1}_g(v)$ is a viscosity solution of \eqref{Dirichle P} as desired.
\end{proof}
The next result guarantees the uniqueness of solution for \eqref{Dirichle P} when $b=c$ is a constant  and $f(x,u)=\bar{f}(u)$ is independent of variable $x$.
\begin{theorem}
  Assume that $f(x,t)=\bar{f}(t)$ such that $t\mapsto \bar{f}(t)\operatorname{e^{3G(t)}}$ is a non-increasing function and $b=c$ is  constant. Then the Dirichlet problem \eqref{Dirichle P} has at most one solution.
\end{theorem}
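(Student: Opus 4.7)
The plan is to pull the uniqueness question back to the reduced equation $\Delta_\infty v + h(v) = 0$ through the change of variables $v=\Phi_g(u)$ furnished by Theorem~\ref{thm-VS}, and then invoke a comparison principle for the infinity Laplacian.

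Concretely, using $\alpha = 2$ and $\beta = 0$ from Section~\ref{subsec-infinityLaplace} (so $\alpha+\beta+1 = 3$), Theorem~\ref{thm-VS} asserts that a continuous function $u$ is a viscosity solution of \eqref{Dirichle P} with $f(x,t)=\bar{f}(t)$ and constant boundary data $c$ if and only if $v=\Phi_g(u)$ is a viscosity solution of
\begin{equation*}
    \Delta_\infty v + h(v) = 0 \text{ in } \Omega, \qquad v = \Phi_g(c) \text{ on } \partial\Omega,
\end{equation*}
where $h(s)=\mathrm{e}^{3G(\Phi_g^{-1}(s))}\bar{f}(\Phi_g^{-1}(s))$. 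Since $\Phi_g$ is a strictly increasing homeomorphism of $\mathbb{R}$ onto itself, this correspondence is a bijection between the two solution sets, so it suffices to establish uniqueness for the reduced problem.

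Next I would check that the hypothesis that $t\mapsto \bar{f}(t)\mathrm{e}^{3G(t)}$ is non-increasing transfers directly to $h$: writing $h(s)=\bigl[\bar{f}(t)\mathrm{e}^{3G(t)}\bigr]_{t=\Phi_g^{-1}(s)}$ and noting that $\Phi_g^{-1}$ is strictly increasing, $h$ is the composition of a non-increasing function with an increasing one, hence non-increasing in $s$. Consequently the reduced equation, rewritten as $-\Delta_\infty v - h(v) = 0$, is proper in the viscosity sense because $v\mapsto -h(v)$ is non-decreasing, and uniqueness for this Dirichlet problem with constant boundary data follows from the standard comparison principle for the inhomogeneous infinity Laplace equation with non-decreasing zero-order term, as available in the framework of \cite{Bhatta,Bhatta2}. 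Composing back with $\Phi_g^{-1}$ yields uniqueness for \eqref{Dirichle P}.

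The delicate point I expect to face is confirming that the comparison result in \cite{Bhatta,Bhatta2} applies under only \emph{non-strict} monotonicity of $h$ and for $h$ which is merely continuous on $\mathbb{R}$. If the invoked principle requires strict monotonicity, I would perturb by $h_\varepsilon(v) = h(v) - \varepsilon v$, apply comparison to the strictly proper equation $\Delta_\infty v + h_\varepsilon(v) = 0$ to get $v_1^\varepsilon \equiv v_2^\varepsilon$, and pass to the limit $\varepsilon \to 0^+$ via the stability of viscosity solutions, which in conjunction with the uniform continuity estimates in \cite{Bhatta} should deliver $v_1 \equiv v_2$ on $\overline{\Omega}$.
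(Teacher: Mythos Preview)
Your proposal is correct and follows essentially the same route as the paper: transform via $v=\Phi_g(u)$ using Theorem~\ref{thm-VS}, observe that $h(s)=\bar f(\Phi_g^{-1}(s))\,e^{3G(\Phi_g^{-1}(s))}$ is non-increasing because $\Phi_g^{-1}$ is increasing, and then invoke the uniqueness result for $\Delta_\infty v=h_0(v)$ with constant boundary data from \cite{Bhatta} (specifically Corollary~7.5 there), transferring back via the bijection $\Phi_g$. Your contingency perturbation $h_\varepsilon$ is unnecessary, since that corollary already covers the merely non-decreasing case for $h_0=-h$.
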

\begin{proof}
Let $h(s)=\mathrm{e}^{3G(\Phi^{-1}_{g}(s))}\bar{f}(\Phi^{-1}_{g}(s))$, where $\Phi^{-1}_{g}$ is the inverse of $\Phi_{g}$ in \eqref{KKC}. Since  $\Phi^{-1}_{g}$ is increasing and by assumption $t\mapsto \bar{f}(t)\operatorname{e^{3G(t)}}$ is non-increasing, we have that $h$ is also a non-increasing function. So, $h_0=-h$ is non-decreasing. According with \cite[Corollary 7.5]{Bhatta} the Dirichlet problem
\begin{equation}\label{DP-Unic}
    \left\{
    \begin{aligned}
     \Delta_{\infty}v&=h_0(v)\;&\mbox{in}&\;\; \Omega\\
    v&=\tilde{c}\;&\mbox{on}&\;\; \partial\Omega,
    \end{aligned}
    \right.
\end{equation}
where  $\tilde{c}$ is any constant admits at most one solution. On the other hand, from Theorem~\ref{thm-VS}, for each solution $u$ of \eqref{Dirichle P}, the function $v=\Phi_g(u)$ is a solution of \eqref{DP-Unic} with $\tilde{c}=\Phi_g(c)$ . Since $\Phi_g$ is strictly increasing, the uniqueness of the solution for \eqref{Dirichle P} follows from the problem \eqref{DP-Unic}.
\end{proof}
To discuss our non-existence result, we assume that $f$ is non-negative, that is, $f:\Omega \times \mathbb{R}\to  [0, \infty)$ is  continuous and satisfies  \hyperlink{f0}{$(f_0)$}. In addition,  for $\ell=\inf_{\partial\Omega}\Phi_g(b)$ we define
\begin{align}\label{etafg}
    \eta_{f,g}(t)=\inf_{\Omega\times[t,\infty)}\operatorname{e}^{3G(\Phi^{-1}_{g}(s))}f(x,\Phi^{-1}_{g}(s)),\;\;\mbox{for every}\;\; t\ge \ell
\end{align}
where $\Phi^{-1}_{g}$ is the inverse of $\Phi_{g}$ in \eqref{KKC}.
Without loss of generality, we can assume in our subsequent discussion  that $\eta_{f,g}$ is continuous in $[\ell, \infty)$, see \cite{Bhatta} for details. We adopt the following two assumptions
\begin{enumerate}
  \item [\hypertarget{eta0}{$(\eta_0)$}] $\eta_{f,g}(t)>0$ for all $t>\ell$
  
  \item [\hypertarget{eta1}{$(\eta_1)$}] $\displaystyle\sup_{a>\ell}\zeta_{f,g}(a)=S_{f,g}<\infty$, where
  \[
  \zeta_{f,g}(a)=\int_{\ell}^{a}\frac{1}{\sqrt[4]{H_{f,g}(a)-H_{f,g}(t)}}dt,\;\;\mbox{for}\;\; a>\ell
  \]
  with $H_{f,g}(t)=\int_{\ell}^{t}\eta_{f,g}(s)ds$  for $t>\ell$.
\end{enumerate}
\begin{theorem}\label{thm-nonexistence}
   Let $B_R(z)$ be the in-ball of $\Omega$. Assume that $f$ and $g$ are such that $\eta_{f,g}$ in \eqref{etafg}  satisfies  \hyperlink{eta0}{$(\eta_0)$} and \hyperlink{eta1}{$(\eta_1)$}. Then the Dirichlet problem \eqref{Dirichle P} has no solutions $u\in C(\overline{\Omega})$, besides, possibly the constant solution, if $R>S_{f,g}/\sqrt{2}$.
\end{theorem}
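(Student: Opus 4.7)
The plan is to reduce the non-existence statement to the corresponding result for the pure infinity-Laplace Dirichlet problem by means of Theorem~\ref{thm-VS}, and then apply the radius-based non-existence criterion of \cite{Bhatta}. Suppose $u\in C(\overline{\Omega})$ is a viscosity solution of \eqref{Dirichle P}. By Section~\ref{subsec-infinityLaplace} and Theorem~\ref{thm-VS}, the function $v=\Phi_g(u)\in C(\overline{\Omega})$ is a viscosity solution of
\[
\Delta_{\infty}v + h(x,v)=0 \ \text{in}\ \Omega,\qquad v=\tilde b:=\Phi_g(b)\ \text{on}\ \partial\Omega,
\]
with $h(x,s)=\mathrm{e}^{3G(\Phi^{-1}_{g}(s))}f(x,\Phi^{-1}_{g}(s))\ge 0$. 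Because $\Phi_g:\mathbb{R}\to\mathbb{R}$ is a strictly increasing homeomorphism, this correspondence is a bijection between viscosity solutions that maps constants to constants; in particular the in-ball $B_R(z)$ of $\Omega$ is unchanged by the transformation.

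Next, I would identify the profile function $\eta_{f,g}$ in \eqref{etafg} with the natural profile function attached to the transformed problem. Since $\tilde b=\Phi_g(b)$, we have $\inf_{\partial\Omega}\tilde b=\ell$, and \eqref{etafg} may be rewritten as
\[
\eta_{f,g}(t)=\inf_{(x,s)\in\Omega\times[t,\infty)}h(x,s),
\]
so that $H_{f,g}$, $\zeta_{f,g}$, and $S_{f,g}$ are precisely the quantities that \cite{Bhatta} associates to the equation $\Delta_\infty v + h(x,v)=0$ on $\Omega$ with boundary value $\tilde b$. The hypotheses $(\eta_0)$ and $(\eta_1)$ translate respectively to the positivity of $\eta_{f,g}$ on $(\ell,\infty)$ and the finiteness of $\zeta_{f,g}$, which are the standing assumptions of the radius-based non-existence theorem in \cite{Bhatta}.

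Finally, I would apply that non-existence theorem---a barrier/comparison argument built from an explicit one-dimensional supersolution constructed from $H_{f,g}$ on the in-ball $B_R(z)$---to conclude that when $R>S_{f,g}/\sqrt{2}$ the transformed problem admits no non-constant viscosity solution in $C(\overline{\Omega})$. Via $u=\Phi_g^{-1}(v)$ the same statement transfers to \eqref{Dirichle P}. The only delicate point, and therefore the main obstacle, is to verify that $(\eta_0)$ and $(\eta_1)$ align with the exact hypotheses used by the cited non-existence result from \cite{Bhatta}; once the change of variable $t=\Phi_g^{-1}(s)$ is performed in \eqref{etafg}, all data match and the radius threshold $S_{f,g}/\sqrt{2}$ is preserved by the transformation.
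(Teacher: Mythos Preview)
Your proposal is correct and follows essentially the same route as the paper: apply Theorem~\ref{thm-VS} to pass from a putative solution $u$ of \eqref{Dirichle P} to a solution $v=\Phi_g(u)$ of the transformed problem $\Delta_\infty v+h(x,v)=0$ with boundary data $\tilde b=\Phi_g(b)$, identify $\eta_{f,g}$, $H_{f,g}$, $\zeta_{f,g}$, $S_{f,g}$ with the corresponding quantities for $h$ and $\tilde b$, and invoke \cite[Theorem~4.1]{Bhatta}. Your version is in fact somewhat more explicit than the paper's in checking that $(\eta_0)$--$(\eta_1)$ match the hypotheses of \cite{Bhatta} and in noting that constants correspond to constants under $\Phi_g$.
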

\begin{proof}
    By contradiction, suppose that $u\in C(\overline{\Omega})$ solves the problem \eqref{Dirichle P}. Thus, Theorem~\ref{thm-VS} ensures that $v=\Phi_g(u)$ is a solution of  the  problem 
    \begin{equation}\label{DP-non}
    \left\{
    \begin{aligned}
     \Delta_{\infty}v+h&=0\;&\mbox{in}&\;\; \Omega\\
    v&=\tilde{b}\;&\mbox{on}&\;\; \partial\Omega
    \end{aligned}
    \right.
\end{equation}
where  $\tilde{b}=\Phi_g(b)\in C(\partial\Omega)$ and $h(x,s)=\operatorname{e}^{3G(\Phi^{-1}_{g}(s))}f(x,\Phi^{-1}_{g}(s))$. From \hyperlink{eta0}{$(\eta_0)$} and \hyperlink{eta1}{$(\eta_1)$} we obtain  that the function $h\in C(\Omega\times\mathbb{R}, [0,\infty))$ satisfies all the assumption of \cite[Theorem~4.1]{Bhatta}. Consequently, \eqref{DP-non} does not admit solution for $R>S_{f,g}/\sqrt{2}$. This contradiction completes the proof.  
\end{proof}
\begin{example} As an example of a pair of functions $f$ and $g$ that satisfies \hyperlink{g0}{$(g_0)$}, \hyperlink{f0}{$(f_0)$}, \hyperlink{eta0} {$(\eta_0)$}, and \hyperlink{eta1} {$(\eta_1)$} we can take $f:\Omega\times\mathbb{R}\to \mathbb{R}$ and $g:\mathbb{R}\to\mathbb{R}$ given by
\[
f(x,t)=\frac{{\operatorname{e}^{t+\frac{t^{3}}{3}}}}{(1+t^2)^3} \quad \mbox{and}\quad g(t)=\frac{2t}{(1+t^2)}.
\]
In fact,  we have that $\eta_{f,g}$ in \eqref{etafg} becomes $\eta_{f,g}(t)=e^{t}$. Of course, \hyperlink{eta0} {$(\eta_0)$} holds. Moreover, from \cite[Lemma 8.1]{Bhatta} we can see that  \hyperlink{eta1} {$(\eta_1)$}  holds.
    
\end{example}
\section{Proof of Theorem~\ref{theorem1} and Theorem~\ref{thm-VS}} \label{sec-proof}
In this section, we prove  Theorem~\ref{theorem1} and Theorem~\ref{thm-VS}. We start by employing the assumptions \hyperlink{h1}{$(h_1)$} and \hyperlink{h2}{$(h_2)$} to get the following key result. 
\begin{lemma}\label{A-change} Suppose that the operator $M$ satisfies \hyperlink{h1}{$(h_1)$} and \hyperlink{h2}{$(h_2)$}.  Let $u\in C^2(\Omega)$ and let $\Phi: I\to\mathbb{R}$ be a strictly increasing $C^2$-function defined on an interval $I\supset\{u(x)\;:\; x\in \Omega\}$. Then
\begin{equation}
    M(x, D\Phi(u), D^2\Phi(u))= [\Phi^{\prime}(u)]^{\alpha+\beta+1}M(x,Du, D^2u)+[\Phi^{\prime}(u)]^{\alpha+\beta}\Phi^{\prime\prime}(u)N(x, Du, D^2u).
\end{equation}  
\end{lemma}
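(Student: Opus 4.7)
The plan is a direct chain-rule computation followed by two applications of the structural hypotheses on $M$, applied in the order $(h_1)$ then $(h_2)$. Since $u,\Phi\in C^2$, the chain rule gives $D\Phi(u)=\Phi'(u)\,Du$ and $D^2\Phi(u)=\Phi'(u)\,D^2u+\Phi''(u)\,(Du\otimes Du)$, so the quantity to be computed is
\[
M\bigl(x,\ \Phi'(u)\,Du,\ \Phi'(u)\,D^2u+\Phi''(u)\,(Du\otimes Du)\bigr).
\]

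Because $\Phi$ is strictly increasing, $\Phi'(u)\ge 0$, and at any point where $\Phi'(u)>0$ hypothesis $(h_1)$ with $\lambda=\Phi'(u)$ yields $M(x,\Phi'(u)\,Du,Y)=[\Phi'(u)]^{\alpha}M(x,Du,Y)$ for every $Y\in S_n(\mathbb{R})$; applying this with $Y=\Phi'(u)\,D^2u+\Phi''(u)(Du\otimes Du)$ peels off a factor $[\Phi'(u)]^{\alpha}$. The remaining expression $M(x,Du,\Phi'(u)\,D^2u+\Phi''(u)(Du\otimes Du))$ fits exactly the left-hand side of $(h_2)$ with $p=Du$, $X=D^2u$, $\gamma=\Phi'(u)$, $\sigma=\Phi''(u)$; invoking $(h_2)$ converts it into $[\Phi'(u)]^{\beta+1}M(x,Du,D^2u)+\Phi''(u)[\Phi'(u)]^{\beta}N(x,Du,D^2u)$. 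Multiplying by the $[\Phi'(u)]^{\alpha}$ produced in the previous step immediately gives the claimed identity.

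The only subtlety, which I expect to be the main obstacle, is the degenerate set where $Du(x)=0$ or $\Phi'(u(x))=0$, since $(h_1)$ is stated only for $\lambda\neq 0$. However, the convention $\partial^{ij}_{X}M(x,0,X)=0$ from \eqref{Qmatrix} forces $N(x,0,X)\equiv 0$, so the right-hand side of the identity automatically collapses at such points and is consistent with the formula. The left-hand side is handled by continuity of $M$, together with the continuity of $D\Phi(u)$ and $D^2\Phi(u)$ in $x$, which extends the identity from the open set $\{\Phi'(u)>0,\ Du\neq 0\}$ to all of $\Omega$. In the application of the lemma to $\Phi=\Phi_g$ from \eqref{KKC} this difficulty disappears entirely, since $\Phi_g'=\operatorname{e}^{G}>0$ everywhere, and then the identity is truly pointwise without any limiting argument; so for the downstream use it suffices to verify the formula under the cleaner assumption $\Phi'>0$.
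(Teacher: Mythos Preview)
Your proof is correct and follows exactly the same route as the paper: compute $D\Phi(u)$ and $D^2\Phi(u)$ by the chain rule, apply $(h_1)$ with $\lambda=\Phi'(u)$ to pull out $[\Phi'(u)]^{\alpha}$, and then apply $(h_2)$ with $\gamma=\Phi'(u)$, $\sigma=\Phi''(u)$ to split the remaining term. Your discussion of the degenerate set $\{Du=0\}$ and of the strict positivity $\Phi_g'>0$ goes beyond what the paper writes (it applies the hypotheses without commenting on these edge cases), but the underlying argument is identical.
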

\begin{proof}
It is easy to verify that     
$$D\Phi(u)=\Phi^{\prime}(u)Du \;\;\mbox{and}\;\; D^2\Phi(u)=\Phi^{\prime}(u)D^2u+\Phi^{\prime\prime}(u)Du\otimes Du.$$
Hence, from \hyperlink{h1}{$(h_1)$} and \hyperlink{h2}{$(h_2)$} we have
\begin{align*}
  M(x, D\Phi(u), D^2\Phi(u))&=M(x,\Phi^{\prime}(u)Du, \Phi^{\prime}(u)D^2u+\Phi^{\prime\prime}(u)Du\otimes Du)\\ 
  &=[\Phi^{\prime}(u)]^{\alpha}M(x,Du, \Phi^{\prime}(u)D^2u+\Phi^{\prime\prime}(u)Du\otimes Du)\\
  &=[\Phi^{\prime}(u)]^{\alpha}\Big[[\Phi^{\prime}(u)]^{\beta+1}M(x,Du, D^2u)+[\Phi^{\prime}(u)]^{\beta}\Phi^{\prime\prime}(u)N(x, Du, D^2u)\Big]
\end{align*}
as desired.
\end{proof}
\begin{lemma}\label{C2-change} Let $\Phi_g:\mathbb{R}\to\mathbb{R}$ be given by \eqref{KKC} with $g\in C(\mathbb{R},\mathbb{R})$ such that \hyperlink{g0}{$(g_0)$} holds. Then $\Phi_g$ is a strictly increasing $C^2$-diffeomorphism  such that  $\Phi_{g}(-\infty)=-\infty$, $\Phi_{g}(0)=0$,  and $\Phi_{g}(+\infty)=+\infty$. 
\end{lemma}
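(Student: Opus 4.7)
The plan is to verify each claim of the lemma directly from the explicit formula $\Phi_g(t)=\int_0^t e^{G(s)}\,ds$, exploiting the sign condition $(g_0)$ only to pin down the behavior at $\pm\infty$. The three things to check are: (i) $\Phi_g$ is a strictly increasing $C^2$-diffeomorphism of $\mathbb R$, (ii) $\Phi_g(0)=0$, and (iii) $\Phi_g(\pm\infty)=\pm\infty$. Item (ii) is immediate from the definition, so the real content is in (i) and (iii).

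For (i) I would start from the fundamental theorem of calculus: since $g\in C(\mathbb R)$, the antiderivative $G$ is $C^1$, and therefore $\Phi_g'(t)=e^{G(t)}$ is $C^1$, which makes $\Phi_g\in C^2(\mathbb R)$. Because $\Phi_g'(t)=e^{G(t)}>0$ everywhere, $\Phi_g$ is strictly increasing. Once (iii) is established, $\Phi_g:\mathbb R\to\mathbb R$ is a continuous strictly increasing surjection, hence a homeomorphism; combined with $\Phi_g'\neq 0$ and the inverse function theorem, this upgrades it to a $C^2$-diffeomorphism. So the $C^2$-diffeomorphism statement will follow from (iii).

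For (iii), I would feed $(g_0)$ into $G$. On the positive side, for $s\ge s_0$ one has $g(s)\ge 0$, so $G$ is non-decreasing on $[s_0,\infty)$ and hence $e^{G(s)}\ge e^{G(s_0)}=:c_+>0$. Splitting $\Phi_g(t)=\int_0^{s_0}e^{G(s)}\,ds+\int_{s_0}^t e^{G(s)}\,ds$ for $t\ge s_0$ gives the lower bound
\[
\Phi_g(t)\ \ge\ \Phi_g(s_0)+c_+(t-s_0)\ \longrightarrow\ +\infty.
\]
On the negative side, for $s\le -s_0$ the identity $G(s)=G(-s_0)-\int_s^{-s_0}g(\tau)\,d\tau$ together with $g\le 0$ on $(-\infty,-s_0]$ shows that $G(s)\ge G(-s_0)$, hence $e^{G(s)}\ge c_-:=e^{G(-s_0)}>0$ on $(-\infty,-s_0]$. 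Using $\Phi_g(t)=-\int_t^0 e^{G(s)}\,ds$ and keeping only the portion of the integral on $[t,-s_0]$ for $t\le -s_0$ yields
\[
\Phi_g(t)\ \le\ -c_-(-s_0-t)-\int_{-s_0}^0 e^{G(s)}\,ds\ \longrightarrow\ -\infty.
\]

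I do not anticipate a serious obstacle here: the argument is essentially a careful bookkeeping exercise with the definitions. The only subtle point is that $(g_0)$ controls $g$ only outside $[-s_0,s_0]$, so $G$ need not be monotone on all of $\mathbb R$; this is why I split the integrals at $\pm s_0$ and use only the uniform positive lower bound on $e^{G}$ outside that interval, which is all that is needed to force the limits to be $\pm\infty$.
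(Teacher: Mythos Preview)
Your proposal is correct and follows essentially the same route as the paper: compute $\Phi_g'(t)=e^{G(t)}>0$ and $\Phi_g''(t)=g(t)e^{G(t)}$ to get strict monotonicity and $C^2$-regularity, then split the integral at $\pm s_0$ and use $(g_0)$ to bound $G(s)\ge G(\pm s_0)$ on the respective tails, forcing $\Phi_g(\pm\infty)=\pm\infty$. The only cosmetic difference is that the paper writes out the chain-rule formulas for $(\Phi_g^{-1})'$ and $(\Phi_g^{-1})''$ explicitly, whereas you appeal to the inverse function theorem; the content is the same.
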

\begin{proof} It is clear that $\Phi_{g}(0)=0$. In addition, for  $t\in\mathbb{R}$, we have 
\begin{equation}\label{deddPhi}
  \Phi^{\prime}_{g}(t)=\operatorname{e}^{G(t)}>0\;\; \mbox{and}\;\; \Phi^{\prime\prime}_{g}(t)=g(t)\operatorname{e}^{G(t)}.  
\end{equation}
Hence,  $\Phi_g$ is a strictly increasing $C^2$ function on $\mathbb{R}$ that is concave on $(-\infty, s_0]$ and convex on $[s_0, \infty)$. In order to prove that $\Phi_g$ is  onto,  we employ assumption  \hyperlink{g0}{$(g_0)$}. Indeed, for $s\le -s_0\le 0$, we have
$$G(s)=-\int_{s}^{0}g(\tau)d\tau =\int_{s}^{-s_0}|g(\tau)|d\tau+G(-s_0)\ge G(-s_0).$$
Hence, for $t\le -s_0$, we obtain 
\begin{equation}\nonumber
    \Phi_{g}(t)=-\int_{t}^{-s_0}\operatorname{e}^{G(s)}ds-\int_{-s_0}^{0}\operatorname{e}^{G(s)}ds\le -\int_{t}^{-s_0}\operatorname{e}^{G(s)}ds\le -\operatorname{e}^{G(-s_0)}\int_{t}^{-s_0}ds.
\end{equation}
It follows that $\Phi_{g}(-\infty)=\lim_{t\to-\infty} \Phi_{g}(t)=-\infty$. Analogously, for $s\ge s_0\ge 0$, the assumption \hyperlink{g0}{$(g_0)$} yields 
$$G(s)=\int_{0}^{s}g(\tau)d\tau =\int_{0}^{s_0}g(\tau)d\tau+\int_{s_0}^{s}|g(\tau)|d\tau\ge G(s_0)$$
and thus, for $t\ge s_0$
\begin{equation}\nonumber
    \Phi_{g}(t)=\int_{0}^{s_0}\operatorname{e}^{G(s)}ds+\int_{s_0}^{t}\operatorname{e}^{G(s)}ds\ge \int_{s_0}^{t}\operatorname{e}^{G(s)}ds\ge \operatorname{e}^{G(s_0)}\int_{s_0}^{t}ds.
\end{equation}
Consequently, $\Phi_{g}(+\infty)=\lim_{t\to+\infty} \Phi_{g}(t)=+\infty$. Finally, the chain rule and \eqref{deddPhi} imply that
\begin{equation}\label{deddPhi-}
    (\Phi^{-1}_{g})^{\prime}(s)=\frac{1}{\Phi^{\prime}_g(t)} \;\; \mbox{and}\;\; (\Phi^{-1}_{g})^{\prime\prime}(s)=-\frac{\Phi^{\prime\prime}_{g}(t)}{[\Phi^{\prime}_g(t)]^{3}} 
\end{equation}
with $t=\Phi^{-1}_{g}(s)$. Hence, $\Phi^{-1}_{g}$ belongs to $C^2$, which completes the proof. 
\end{proof}
\begin{proof}[Proof of Theorem~\ref{theorem1}] From Lemma~\ref{C2-change},  $\Phi_g$ is a strictly increasing $C^2$-diffeomorphism on $\mathbb{R}$ with 
$\Phi^{\prime}_{g}(t)=\mathrm{e}^{G(t)}$, $\Phi_g(-\infty)=-\infty$, $\Phi_g(0)=0$, and $\Phi_g(+\infty)=+\infty$.  Hence, for any $u\in C^{2}(\Omega)$ solution of \eqref{ECG}, by setting $v=\Phi_g(u)$  the Lemma~\ref{A-change} yields 
\begin{equation}\nonumber
\begin{aligned}
    M(x, Dv, D^2v)&=[\Phi_{g}^{\prime}(u)]^{\alpha+\beta+1}M(x,Du, D^2u)+[\Phi_{g}^{\prime}(u)]^{\alpha+\beta}\Phi_{g}^{\prime\prime}(u)N(x, Du, D^2u)\\
    &=[\Phi_{g}^{\prime}(u)]^{\alpha+\beta}\Phi_{g}^{\prime\prime}(u)N(x, Du, D^2u)\\
    &-[\Phi_{g}^{\prime}(u)]^{\alpha+\beta+1}[g(u)N(x,Du, D^2u)+f(x,u)]\\
    &=\Big[[\Phi_{g}^{\prime}(u)]^{\alpha+\beta}\Phi_{g}^{\prime\prime}(u)-g(u)[\Phi_{g}^{\prime}(u)]^{\alpha+\beta+1}\Big]N(x, Du, D^2u)- \mathrm{e}^{(\alpha+\beta+1)G(u)}f(x,u).
\end{aligned}
\end{equation} 
Since that $u=\Phi^{-1}_g(v)$ and $t\mapsto \Phi_g(t)$ solves the ODE $$y^{\prime\prime}(t)-g(y(t))y^{\prime}(t)=0,\;\;t\in\mathbb{R}$$ we obtain
$$
 M(x, Dv, D^2v)+\mathrm{e}^{(\alpha+\beta+1)G(\Phi^{-1}_g(v))}f(x,\Phi^{-1}_g(v))=0.
$$
Hence, $v=\Phi_g(u)$ is  solution for \eqref{EWG}. Reciprocally, if $v\in C^2(\Omega)$ is a solution for \eqref{EWG}, then $u=\Phi^{-1}_g(v)$ belongs to $C^{2}(\Omega)$. In addition, 
\begin{equation}
    M(x,D\Phi_g(u), D^2\Phi_g(u))= M(x,Dv, D^2v)=-\mathrm{e}^{(\alpha+\beta+1)G(u)}f(x,u)
\end{equation}
and, from Lemma~\ref{A-change}
\begin{equation}\nonumber
    [\Phi^{\prime}_{g}(u)]^{\alpha+\beta+1}M(x,Du, D^2u)+[\Phi^{\prime}_{g}(u)]^{\alpha+\beta}\Phi^{\prime\prime}_{g}(u)N(x,Du, D^2u)+\mathrm{e}^{(\alpha+\beta+1)G(u)}f(x,u)=0.
\end{equation}
Since $\Phi^{\prime}_{g}(t)=\mathrm{e}^{G(t)}$ and $\Phi^{\prime\prime}_{g}(t)=g(t)\Phi^{\prime}_{g}(t)$, it  is equivalent to the following
\begin{equation}\nonumber
   M(x,Du, D^2u)+g(u)N(x,Du, D^2u)+f(x,u)=0
\end{equation}
as desired.
\end{proof}     

\begin{proof}[Proof of Theorem~\ref{thm-VS}] Assume that $v:\Omega\to\mathbb{R}$ is a viscosity solution of \eqref{EWGV}. If $u=\Phi^{-1}_g(v)$, we have $u\in C(\Omega).$ Now, let  $\varphi\in C^{2}(\Omega)$ be such that $u-\varphi$ has a local maximum at $x_0$. In particular, we have an open set $\mathcal{O}\subset\Omega$ such that $u(x)\le u(x_0)+\varphi(x)-\varphi(x_0),\;\;\mbox{for all}\;\; x\in \mathcal{O}$. By setting $\overline{\varphi}=u(x_0)+\varphi-\varphi(x_0)$ in $\Omega$, we have 
$$
\overline{\varphi}\in C^{2}(\Omega)\;\; :\;\;u\le \overline{\varphi}\;\;\mbox{in}\;\;\mathcal{O}\;\;\mbox{and}\;\; u(x_0)=\overline{\varphi}(x_0).
$$
Since $\Phi_g$ is an increasing $C^2$ diffeomorphism, for $\psi=\Phi_g(\overline{\varphi})$ and $v=\Phi_g(u)$, we obtain 
$$
\psi\in C^{2}(\Omega)\;\; :\;\;v\le \psi\;\;\mbox{in}\;\;\mathcal{O}\;\;\mbox{and}\;\; v(x_0)=\psi(x_0).
$$
So, $v-\psi$ has a local maximum at $x_0$. Since $v$ is a viscosity subsolution of \eqref{EWGV} and $\psi=\Phi_g(\overline{\varphi})$, we have
\begin{equation}\nonumber
M(x_0,D\Phi_g(\overline{\varphi})(x_0),D^2\Phi_g(\overline{\varphi})(x_0))+h(x_0,v(x_0))\le 0.
\end{equation}
Thus, from Lemma~\ref{A-change} and using  $h(x,s)=\operatorname{e}^{(\alpha+\beta+1)G(\Phi^{-1}_g(s))}f(x,\Phi^{-1}_g(s))$, we can write 
\begin{equation}\nonumber
\begin{aligned}
&[\Phi^{\prime}_g(\overline{\varphi}(x_0))]^{\alpha+\beta+1}M(x_0,D\overline{\varphi}(x_0), D^2\overline{\varphi}(x_0))+\operatorname{e}^{(\alpha+\beta+1)G(u(x_0))}f(x_0,u(x_0))\\
&\quad +[\Phi^{\prime}_g(\overline{\varphi}(x_0))]^{\alpha+\beta}\Phi^{\prime\prime}_g(\overline{\varphi}(x_0))N(x_0,D\overline{\varphi}(x_0), D^2\overline{\varphi}(x_0))\le 0.  
\end{aligned}
\end{equation}
Recalling $\Phi^{\prime}_{g}(t)=\mathrm{e}^{G(t)}$ and $\Phi^{\prime\prime}_{g}(t)=g(t)\Phi^{\prime}_{g}(t)$ and noticing that $D\overline{\varphi}(x_0)=D\varphi(x_0)$, $D^2\overline{\varphi}(x_0)=D^2\varphi(x_0)$, and $\overline{\varphi}(x_0)=u(x_0)$,  we obtain 
\begin{equation}\nonumber
\begin{aligned}
M(x_0,D\varphi(x_0), D^2\varphi(x_0))+g(u(x_0))N(x_0, D\varphi(x_0), D^2\varphi(x_0))+f(x_0,u(x_0))\le 0.  
\end{aligned}
\end{equation}
It follows that $u=\Phi^{-1}_g(v)$ is a viscosity subsolution for \eqref{ECGV}. Analogously, we can see that $u=\Phi^{-1}_g(v)$ is a viscosity supersolution. 

Conversely, suppose that $u:\Omega\to\mathbb{R}$ is a viscosity solution of \eqref{ECGV} in $\Omega$ and let $v\in C(\Omega)$ be given by $v=\Phi_g(u)$. For any $\psi\in C^2(\Omega)$ such that $v-\psi$ has a local maximum at $x_0\in\Omega$, we have $v(x)\le v(x_0)+\psi(x)-\psi(x_0)$ for all $x\in\mathcal{O}$, for some open set $\mathcal{O}\subset\Omega$. Let us take $\overline{\psi}(x)=v(x_0)+\psi(x)-\psi(x_0)$ for all $x\in\Omega$. Then
$$
\overline{\psi}\in C^{2}(\Omega)\;\; :\;\;v\le \overline{\psi}\;\;\mbox{in}\;\;\mathcal{O}\;\;\mbox{and}\;\; v(x_0)=\overline{\psi}(x_0).
$$
Since $\Phi^{-1}_g$ is an increasing $C^2$ diffeomorphism, for $\varphi=\Phi^{-1}_g(\overline{\psi})$ and  noticing that $u=\Phi^{-1}_g(v)$, we obtain 
$$
\varphi\in C^{2}(\Omega)\;\; :\;\;u\le \varphi\;\;\mbox{in}\;\;\mathcal{O}\;\;\mbox{and}\;\; u(x_0)=\varphi(x_0).
$$
In particular, $u-\varphi$ has a local maximum at $x_0$. Since $u$ is a viscosity subsolution of \eqref{ECGV} and $u(x_0)=\varphi(x_0)$, we can write
\begin{equation}\label{voltaPI}
    \begin{aligned}
     M(x_0,D\varphi(x_0), D^2\varphi(x_0))+g(\varphi(x_0))N(x_0,D\varphi(x_0),D^2\varphi(x_0))+f(x_0,\varphi(x_0))\le 0.   
    \end{aligned}
\end{equation}
From Lemma~\ref{A-change} and using that $D\overline{\psi}(x_0)=D\psi(x_0)$, and $D^2\overline{\psi}(x_0)=D^2\psi(x_0)$, and $\overline{\psi}=\Phi_g(\varphi)$, we have
\begin{equation}\label{voltapII}  
\begin{aligned}
     M(x_0,D\psi(x_0), D^2\psi(x_0))&=M(x_0, D\Phi_g(\varphi)(x_0), D^2\Phi_g(\varphi)(x_0))\\
     &= [\Phi_{g}^{\prime}(\varphi(x_0))]^{\alpha+\beta+1}M(x_0,D\varphi(x_0), D^2\varphi(x_0))\\
     &+[\Phi_{g}^{\prime}(\varphi(x_0))]^{\alpha+\beta}\Phi_{g}^{\prime\prime}(\varphi(x_0))N(x_0,D\varphi(x_0), D^2\varphi(x_0))\\
     &= [\Phi_{g}^{\prime}(\varphi(x_0))]^{\alpha+\beta+1}\Big[M(x_0,D\varphi(x_0), D^2\varphi(x_0))\\
   &+ g(\varphi(x_0))N(x_0, D\varphi(x_0), D^2\varphi(x_0))\Big].
    \end{aligned}
\end{equation}
where we have used that $\Phi^{\prime\prime}_g(t)=g(t)\Phi^{\prime}_g(t)$. By combining \eqref{voltaPI} with \eqref{voltapII}, we get
\begin{equation}
    \begin{aligned}
         M(x_0,D\psi(x_0), D^2\psi(x_0))&\le -[\Phi_{g}^{\prime}(\varphi(x_0))]^{\alpha+\beta+1}f(x_0,\varphi(x_0))\\
         &=-\operatorname{e}^{(\alpha+\beta+1)G(\varphi(x_0))}f(x_0,\varphi(x_0))\\
         &=-h(x_0,v(x_0)).
    \end{aligned}
\end{equation}
where we have used that $\varphi(x_0)=\Phi^{-1}_{g}(\overline{\psi}(x_0))=\Phi^{-1}_{g}(v(x_0))$. Thus, $v=\Phi_g(u)$ is a viscosity subsolution of \eqref{EWGV}. A similar argument allows to conclude that $v=\Phi_g(u)$ is also a viscosity supersolution of \eqref{EWGV}. This conclude the proof. 
\end{proof}

\begin{flushleft}
{\bf Funding:}  
\begin{justify} This work was partially supported from CNPq through grants 309491/2021-5, 303443/2025-1.
\end{justify}
\end{flushleft}

\end{document}